\newcommand{\mean}{{E}}
\newcommand{\prob}{\mathbb{P}}
\newcommand{\corr}{\text{corr}}
\newcommand{\unif}{\textsf{Unif}}
\newcommand{\bern}{\textsf{Bern}}
\newcommand{\ind}{\textbf{1}}
\newcommand{\stddev}{\textrm{sd}}
\newcommand{\probability}{P}
\begin{document}

\newtheorem{theorem}{Theorem}
\newtheorem{lemma}{Lemma}
\newtheorem{proposition}{Proposition}
\theoremstyle{definition}
\newtheorem{definition}{Definition}

\title{Multivariate distributions with fixed
marginals and correlations}

\author{Mark Huber\thanks{Claremont McKenna College (email: {\tt mhuber@cmc.edu})} ~and Nevena Mari\'c\thanks{University of Missouri-St. Louis (email: {\tt maric@math.umsl.edu})} }

\maketitle

\begin{abstract}
Consider the problem of drawing random variates 
$(X_1,\ldots,X_n)$ from a distribution where the marginal of each
$X_i$ is specified, as well as the correlation between every pair $X_i$ and 
$X_j$.  For given marginals, the Fr\'echet-Hoeffding bounds put a lower and 
upper bound on the correlation between $X_i$ and $X_j$.  
Any achievable correlation between 
$X_i$ and $X_j$ is a convex combinations of these bounds. 
The value $\lambda(X_i,X_j) \in [0,1]$ of this convex 
combination is called here the convexity parameter
of $(X_i,X_j),$ with $\lambda(X_i,X_j) = 1$ 
corresponding to the upper bound and maximal correlation.
For given marginal distributions functions 
$F_1,\ldots,F_n$ of $(X_1,\ldots,X_n)$ we show that 
$\lambda(X_i,X_j) = \lambda_{ij}$
if and only if there exist symmetric Bernoulli random variables 
$(B_1,\ldots,B_n)$ (that is $\{0,1\}$ random variables with mean 1/2) 
such that 
$\lambda(B_i,B_j) = \lambda_{ij}$. 
In addition, we characterize completely the set
of convexity parameters for symmetric Bernoulli marginals in 
two, three and four
dimensions.

%For 
%three dimensional problems, we give necessary and sufficient conditions
%for a convexity matrix to be obtainable, for higher dimensions these
%conditions are necessary but not sufficient.
\end{abstract}

\section{Introduction}
Consider the problem of simulating 
a random vector 
$(X_1,\ldots,X_n)$ with second moments 
where for all $i$ the cumulative 
distribution function (cdf) of $X_i$ is $F_i$, and for all $i$ and $j$
the correlation between
$X_i$ and $X_j$ should be $\rho_{ij} \in [-1,1]$.  
The correlation here is the 
usual notion
\[
\corr(X,Y) = \frac{\mean[(X - \mean(X))(Y - \mean(Y))]}{\stddev(X)\stddev(Y)}
 = \frac{\mean[XY] - \mean[X]\mean[Y]}{\stddev(X)\stddev(Y)},
\]
for standard deviations $\stddev(X)$ and $\stddev(Y)$ that are finite.

Let $\Omega$ denote the set of matrices with
entries in $[-1,1]$, all the diagonal entries equal $1$, and 
are nonnegative definite.  Then it is well known that any correlation
matrix $(\rho_{ij})$ must lie in $\Omega$.

This problem, in different guises, 
appears in numerous fields: physics~\cite{smith1981gust}, 
engineering~\cite{lampard1968stochastic}, ecology~\cite{Dias2008}, 
and finance~\cite{Lawrance1981}, to name just a few. 
Due to its applicability in the generation of 
synthetic optimization problems, it has also received special attention 
by the simulation community ~\cite{hill1994},~\cite{henderson2000}.

A variety of approaches exist for this well studied problem.
When the marginals are normal and the distribution is continuous with
respect to Lebesgue measure, this is just the problem of generating
a multivariate normal with specified correlation matrix.  It is well 
known how to accomplish this (see, for instance~\cite{fishman1996}, p. 223)
for any matrix in $\Omega$.

For marginals that are not normal, the question is very much harder.
A common method is to employ families of copulas 
(see for instance~\cite{nelson1999}), but there are very few techniques
that apply to general marginals.  Instead, different families of copulas
typically focus on different marginal distributions.

Devroye and Letac~\cite{devroyel2010} showed that if the marginals are
beta distributed with equal parameters at least $1/2$, then when the
dimension is three it is possible to simulate such a vector where
the correlation is any matrix in $\Omega$.
This set of beta distributions 
includes the important case of uniform $[0,1]$ marginals, but
they have not been able to extend their technique to higher dimensions.

Chaganty and Joe~\cite{chagantyj2006} characterized the achievable
correlation matrices when the marginals are Bernoulli.  When the dimension
is 3 their characterization is easily checkable, in higher dimensions they
give a number of inequalities that grows exponentially in the dimension. 

For the case of general marginals,  
in statistics there is a tradition of using transformations  of 
mutivariate normal vectors dating back to Mardia~\cite{mardia1970} 
and Li and Hammond~\cite{li1975}. This approach relies heavily
on developing usable numerical methods.
In this paper we approach the same problem using exclusively 
probabilistic techniques. 

We show that for many correlation matrices
the problem of simulating from a multivariate distribution with fixed marginals
and specified correlation 
can be reduced to showing the existence of a multivariate
distribution whose marginals are Bernoulli with mean $1/2$, and for
each pair of marginals, there is a specified probability that the pair takes
on the same value.
For $n=2,3,4$ we are able to give necessary and sufficient 
conditions on those agreement probabilities in order for such a distribution
to exist.

%\textcolor{red} {In this paper, we show that in fact the 
%general problem can be reduced 
%onto existence of multivariate Bernoulli(1/2) distribution solely. 
%The main question becomes then: 
%When it is possible to simulate $n$ honest coins with given 
%probabilities that they pair-wise take equal values? 
%For $n=2,3,4$ we are able to give necessary and sufficient 
%conditions on those probabilities. 
%}
\paragraph{The convexity graph.}  Any two random variables
$X$ and $Y$ have correlation in $[-1,1]$, but if the 
marginal distributions of $X$
and $Y$ are fixed, it is generally not possible to build a bivariate
distribution for any correlation in $[-1,1]$.  For instance,
for $X$ and $Y$ both exponentially distributed, the correlation must lie
in $[1 - \pi^2/6,1]$.  The range of achievable correlations is always 
a closed interval.

For two dimensions it is well 
known how to find the minimum and maximum correlation.
  These come from the inverse transform method, which works as
follows.  First, given a cdf $F$, define the pseudoinverse of the cdf as
\begin{equation}
F^{-1} = \inf\{x:F(x) \geq u\}.
\end{equation}
When $U$ is uniform over the interval $[0,1]$ (write $U \sim \unif([0,1])$),
$F^{-1}(U)$ is a random variable with cdf $F$ 
(see for instance p.~28 of~\cite{devroye1986}).  Since $U$ and $1 - U$ 
have the same distribution, both can be used in the inverse transform
method.  The random variables $U$ and $1 - U$ are {\em antithetic} random
variables.  Of course $\corr(U,U) = 1$ and $\corr(U,1-U) = -1$, so these 
represent an easy way to get minimum and maximum correlation when the 
marginals are uniform random variables.

The following theorem comes from work of Fr\'echet~\cite{frechet1951}
and Hoeffding~\cite{hoeffding1940}.

\begin{theorem}[Fr\'echet-Hoeffding bound]
\label{THM:fhbound}
For $X_1$ with cdf $F_1$ and $X_2$ with cdf $F_2$,
and $U \sim \unif([0,1])$:
\[
\corr(F_1^{-1}(U),F_2^{-1}(1 - U)) \leq \corr(X_1,X_2) \leq 
  \corr(F_1^{-1}(U),F_2^{-1}(U)).
\]
\end{theorem}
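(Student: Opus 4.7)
The plan is to reduce the correlation bound to a covariance (equivalently, $\mean[X_1X_2]$) bound, since if $X_1$ has cdf $F_1$ then both $F_1^{-1}(U)$ and $F_1^{-1}(1-U)$ also have cdf $F_1$ (similarly for $X_2$), so the means and standard deviations in the definition of correlation are the same across all three pairs. Hence it suffices to sandwich $\mean[X_1X_2]$ between $\mean[F_1^{-1}(U)F_2^{-1}(1-U)]$ and $\mean[F_1^{-1}(U)F_2^{-1}(U)]$.

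The main tool I would invoke is Hoeffding's covariance identity: for any joint cdf $H$ on $\mathbb{R}^2$ with marginals $F_1,F_2$ (and finite second moments),
\[
\mean[X_1X_2] - \mean[X_1]\mean[X_2] = \int_{-\infty}^{\infty}\!\!\int_{-\infty}^{\infty} \bigl[H(x_1,x_2) - F_1(x_1)F_2(x_2)\bigr]\,dx_1\,dx_2.
\]
This identity follows from writing $X_i - X_i' = \int(\ind\{X_i > t\} - \ind\{X_i' > t\})\,dt$ for an independent copy $X_i'$ and then applying Fubini to $\mean[(X_1-X_1')(X_2-X_2')]/2$. Given this identity, the extremal correlations are determined by pointwise extrema of $H$.

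Next I would establish the pointwise Fr\'echet-Hoeffding bounds on the joint cdf: for every coupling with marginals $F_1,F_2$,
\[
\max\{F_1(x_1)+F_2(x_2)-1,\ 0\} \leq H(x_1,x_2) \leq \min\{F_1(x_1),F_2(x_2)\}.
\]
The upper bound is immediate from $\{X_1\leq x_1, X_2\leq x_2\}\subseteq \{X_i\leq x_i\}$; the lower bound follows from inclusion-exclusion and nonnegativity of probability. Then I would verify that the comonotone coupling $(F_1^{-1}(U),F_2^{-1}(U))$ realizes the upper bound, since $\{F_1^{-1}(U)\leq x_1\}\cap\{F_2^{-1}(U)\leq x_2\} = \{U\leq F_1(x_1)\}\cap\{U\leq F_2(x_2)\}$, which has probability $\min\{F_1(x_1),F_2(x_2)\}$. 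A parallel computation using $1-U$ shows that the countermonotone coupling realizes the lower bound.

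Combining the two ingredients finishes the proof: integrating the pointwise inequalities against Hoeffding's identity sandwiches the covariance, and dividing by the common standard deviations gives the correlation inequality. The main obstacle is really just justifying Hoeffding's identity under only the assumption of finite variances (so that the integrand is integrable); this is standard but requires a truncation or Fubini argument that should be cited rather than reproved, after which the remainder of the argument is straightforward manipulation of indicator functions.
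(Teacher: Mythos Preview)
Your argument is correct and is in fact the standard route to this result: reduce to a covariance bound via fixed marginals, invoke Hoeffding's identity $\mathrm{Cov}(X_1,X_2)=\iint (H-F_1F_2)$, apply the pointwise Fr\'echet bounds on $H$, and verify that the comonotone and countermonotone couplings $(F_1^{-1}(U),F_2^{-1}(U))$ and $(F_1^{-1}(U),F_2^{-1}(1-U))$ attain those pointwise bounds.

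There is nothing to compare against, however: the paper does not supply a proof of this theorem. It is stated as a classical result and attributed to Fr\'echet and Hoeffding via citation, then used as a black box in the rest of the paper. So your proposal goes beyond what the paper itself provides.
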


In other words, the maximum correlation between $X_1$ and $X_2$ is 
achieved when the same uniform is used in the inverse transform method
to generate both.  The minimum correlation between $X_1$ and $X_2$ is
achieved when antithetic random variates are used in the inverse transform
method. 

\begin{definition}
Consider random variables $X$ and $Y$ with finite second moments, and
cdf $F_X$ and $F_Y$ respectively.  For $U$ uniform on $[0,1]$,
let $\rho^- = \corr(F_X^{-1}(U)F_Y^{-1}(1 - U))$ and 
$\rho^+ = \corr(F_X^{-1}(U) F_Y^{-1}(U))$.  Then (by the Fr\'echet-Hoeffding
bound) there is a unique $\lambda \in [0,1]$ such that 
\[
\corr(X,Y) = \lambda \rho^{+} + (1 - \lambda) \rho^{-}.
\]
Call $\lambda = \lambda(X,Y)$ the {\em convexity parameter} of 
$X$ and $Y$.
\end{definition}

\begin{definition}
Consider $(X_1,\ldots,X_n)$ with finite second moments, 
where each $X_i$ has cdf $F_i$, and the correlation between 
$X_i$ and $X_j$ is $\rho_{ij}$.  Then 
the complete graph on $\{1,\ldots,n\}$ where edge $\{i,j\}$ has weight
$\lambda_{ij} = \lambda(X_i,X_j)$ is the {\em convexity graph}
of the distribution.
\end{definition}

Let ${\cal B}_n$ be the set of probabilities on $\{0,1\}^n$ such that if
$(B_1,\ldots,B_n) \sim \mu$ where $\mu \in {\cal B}_n$, then
$\probability(B_i = 1) = 1/2$ for all $i$.

\begin{theorem}
\label{THM:main}
Let $(B_1,\ldots,B_n) \sim \mu \in {\cal B}_n$.  Then
$\lambda(B_i,B_j) = \probability(B_i = B_j)$ for all $i < j$.
For all distribution functions $F_1,\ldots,F_n$ with second moments,
there exists a distribution for $(X_1,\ldots,X_n)$ such that for all
$i$ we have $\probability(X_i \leq x) = F_i(x)$ and for all
$i < j$ we have $\lambda(X_i,X_j) = \lambda(B_i,B_j)$.  
\end{theorem}

\begin{proof} For $(B_i,B_j)$ with symmetric Bernoulli marginals,
the value of 
either $\probability(B_i = B_j)$ or $\corr(B_i,B_j)$ (which is one-to-one with
$\lambda(B_i,B_j)$) completely 
determines the bivariate distribution.  It is then
straightforward to verify that $\probability(B_i = B_j) = \lambda(B_i,B_j)$.

Next, consider $U$ uniform on $[0,1]$ independent of
$(B_1,\ldots,B_n)$.  Then
$X_i = F_i^{-1}(U B_i + (1 - U)(1 - B_i))$ has 
the correct marginals and again it is straightforward to show
$\lambda(X_i,X_j) = \lambda(B_i,B_j)$.
\end{proof}

Theorem~\ref{THM:main} immediately gives us a way to simulate from
a distribution $(X_1,\ldots,X_n)$ with given convexity parameters in
linear time, provided it is possible to simulate from a multivariate
symmetric Bernoulli with the same convexity parameters.
The next result characterizes when such a multivariate Bernoulli exists
in two, three, and four dimensions, 
and gives necessary conditions for higher dimensions.

\begin{theorem}
\label{THM:nec}
Suppose $(B_1,B_2,\ldots,B_n)$ are random variables with
$\prob(B_i = 1) = \prob(B_i = 0) = 1/2$ for all $i$.  When $n = 2$, it
is possible to simulate $(B_1,B_2)$ for any $\lambda_{12} \in [0,1]$.
When $n = 3$, it is possible to simulate $(B_1,B_2,B_3)$ if and only if
\[
1 + 2\min\{\lambda_{23},\lambda_{12},\lambda_{13}\}
 \geq \lambda_{23} + \lambda_{12} + \lambda_{13} \geq 1.
\]
When $n = 4$, it is possible to simulate $(B_1,B_2,B_3,B_4)$ if and only if
\[
\ell \leq u + 1 \text{ and } 1 \leq u
\]
where
\begin{align*}
\ell &= \max (\lambda_{14} + \lambda_{14} + \lambda_{13} + \lambda_{23},
 \lambda_{14} + \lambda_{34} + \lambda_{12} + \lambda_{23},
 \lambda_{24} + \lambda_{34} + \lambda_{12} + \lambda_{13}) \\
u &= \min_{\{i,j,k\}} \lambda_{ij} + \lambda_{jk} + \lambda_{ik}.
\end{align*}
\end{theorem}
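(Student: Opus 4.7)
The plan is to handle the three dimensions separately, exploiting in each case the symmetry $s\mapsto\mathbf{1}-s$ on $\{0,1\}^n$. Specifically, if $p$ is any joint law with uniform marginals and pairwise agreements $\lambda_{ij}$, then the symmetrised law $\tfrac12(p+p^\ast)$, where $p^\ast(s)=p(\mathbf{1}-s)$, has the same marginals and the same agreements and is invariant under coordinate complementation. Hence feasibility in the theorem is equivalent to feasibility among laws symmetric under $s\mapsto\mathbf{1}-s$, of which there are only $2^{n-1}$ free atoms.

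For $n=2$ the symmetric law has two free atoms and the construction is immediate. For $n=3$, symmetrising gives four atoms constrained by one normalisation and three agreement equations, forcing the unique solution
\[
p_{000}=p_{111}=\tfrac{1}{4}(\lambda_{12}+\lambda_{13}+\lambda_{23}-1),
\]
with the three remaining pairs of atoms taking the form $\tfrac{1}{4}(1+\lambda_{jk}-\lambda_{ij}-\lambda_{ik})$. Non-negativity of $p_{000}$ yields the stated lower bound $\lambda_{12}+\lambda_{13}+\lambda_{23}\ge 1$, and non-negativity of the other three yields the three upper bounds that together are $\lambda_{12}+\lambda_{13}+\lambda_{23}\le 1+2\min\{\lambda_{12},\lambda_{13},\lambda_{23}\}$. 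For necessity I would use the identity $\lambda_{12}+\lambda_{13}+\lambda_{23}=1+2\,\prob(B_1=B_2=B_3)$, obtained by counting pairwise agreements on each of the eight atoms, for the lower bound, and the pointwise Hamming triangle inequality $\ind(B_i\ne B_j)\le\ind(B_i\ne B_k)+\ind(B_k\ne B_j)$, taken in expectation, for the upper.

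For $n=4$ symmetrisation leaves eight atoms and seven linear constraints, so exactly one free parameter. I would isolate it by passing to the Walsh basis: writing $S_i:=2B_i-1$, symmetry kills all odd moments, the prescribed second moments are $\mean[S_iS_j]=2\lambda_{ij}-1$, and the single free parameter is $r:=\mean[S_1S_2S_3S_4]$, giving
\[
\prob(S=s)=\tfrac{1}{16}\Bigl[1+\sum_{i<j}(2\lambda_{ij}-1)s_is_j+r\cdot s_1s_2s_3s_4\Bigr].
\]
Non-negativity on the eight orbits of $\{-1,1\}^4$ under $s\mapsto-s$ falls into three types: the all-plus orbit and the three two-minus orbits yield four lower bounds on $r$, while the four one-minus orbits yield four upper bounds indexed by the four triangle sums $T_{ijk}$. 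Using the identity $A_P+C_P=\sum_{i<j}\lambda_{ij}$ for each of the three matching/4-cycle decompositions $(A_P,C_P)$ of the edge set of $K_4$, the non-emptiness of the resulting interval for $r$ collapses to the stated inequality between $u$ and $\ell$, together with the automatic condition $u\ge 1$. Sufficiency then follows by picking any $r$ in the non-empty interval; necessity is automatic from the symmetrisation reduction.

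The main obstacle I anticipate is the algebraic bookkeeping in the $n=4$ case: one must enumerate the eight orbit-wise non-negativity constraints, identify which lower and upper bounds are active in the feasibility condition, and verify that the condensed form of the result matches the stated four-cycle and triangle sum expressions defining $\ell$ and $u$. The symmetrisation step, by contrast, is clean and worth isolating as a preliminary lemma, since it makes both sufficiency and necessity transparent.
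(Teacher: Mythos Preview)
Your approach is correct but follows a genuinely different route from the paper's. The paper never symmetrises: for $n=3$ it sets up the full $8$-unknown linear system with one free parameter $\alpha=p_{111}$ and reads off the feasibility range for $\alpha$ directly; for $n=4$ it invokes a separate correspondence (its lemma on asymmetric Bernoullis) identifying the $4$-dimensional symmetric problem with a $3$-dimensional problem having $\bern(\lambda_{i4})$ marginals and concurrence matrix equal to the upper $3\times 3$ block, and then repeats the $8$-unknown linear-system argument there. Your complementation-symmetrisation lemma is cleaner conceptually and halves the state space at each $n$; passing to the Walsh basis for $n=4$ is the natural Fourier-analytic move and makes the single free parameter $r=\mean[S_1S_2S_3S_4]$ manifest, whereas in the paper the free parameter $\alpha=q_{111}$ emerges only after solving the system. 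What the paper's route buys is constructiveness: the reduction to an asymmetric $3$-dimensional Bernoulli immediately yields a sampler for the $4$-dimensional case. Your separate necessity arguments for $n=3$ via the counting identity $\sum\lambda_{ij}=1+2\,\prob(B_1=B_2=B_3)$ and the Hamming triangle inequality are also absent from the paper, which handles necessity and sufficiency simultaneously through non-negativity of the linear-system solution. The bookkeeping caution you flag for $n=4$ is warranted: when matching your orbit-wise bounds on $r$ to the paper's stated condition you will need to track carefully whether the binding four-cycle quantity is the minimum or the maximum of the three cycle sums, since your $r$ and the paper's $\alpha$ are related by an affine change of variable and the paper's statement already contains at least one evident typographical slip.
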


The rest of the paper is organized as follows.  In the next
section, Theorem~\ref{THM:nec} is shown.  In
Section~\ref{SEC:asymmetric}, 
the set of multivariate asymmetric Bernoulli distributions is linked
to that of symmetric Bernoulli distributions.

\section{Proof of Theorem~\ref{THM:nec}}
\label{SEC:nec}

\subsection{The $n = 2$ and $n = 3$ cases}

\begin{lemma}
\label{LEM:two}
For any $\lambda_{12} \in [0,1]$,
there exists a unique joint distribution on $\{0,1\}^2$ such that 
$(B_1,B_2)$ with this distribution has $B_1,B_2 \sim \bern(1/2)$ and 
$\probability(B_1 = B_2) = \lambda_{12}$.
\end{lemma}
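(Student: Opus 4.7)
The plan is simply to solve the linear system that defines the joint distribution and observe that for $\lambda_{12}\in[0,1]$ the unique solution is a valid probability vector. Most of the work has in fact already been carried out inside the proof of Proposition~\ref{THM:concurrence}.

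First I would parametrize any distribution on $\{0,1\}^2$ by the four numbers $p_{ab}=\prob(B_1=a,B_2=b)$, which must satisfy $p_{00}+p_{01}+p_{10}+p_{11}=1$. The marginal conditions $B_1,B_2\sim\bern(1/2)$ give $p_{10}+p_{11}=1/2$ and $p_{01}+p_{11}=1/2$, and the concurrence condition gives $p_{00}+p_{11}=\lambda_{12}$. This is a linear system of four equations in four unknowns, and the computation in the proof of Proposition~\ref{THM:concurrence} (adding the three nontrivial equations and subtracting the normalization) already yields $p_{11}=\lambda_{12}/2$, hence $p_{00}=\lambda_{12}/2$ and $p_{01}=p_{10}=(1-\lambda_{12})/2$.

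The key check is nonnegativity. Since $\lambda_{12}\in[0,1]$, each of the four values lies in $[0,1/2]\subseteq[0,1]$, so the candidate vector $(p_{ab})$ is an honest probability distribution; this establishes existence. Uniqueness is then automatic: the linear system above determined every $p_{ab}$ as a specific function of $\lambda_{12}$, so no other joint law on $\{0,1\}^2$ can satisfy the three constraints simultaneously.

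I do not expect any real obstacle. The only conceptual remark worth flagging is that the full interval $[0,1]$ is attainable in two dimensions, which is a feature that will fail starting at $n=3$, where positivity of a single $p_{ab}$ is no longer enough and inequalities among the $\lambda_{ij}$ (of the sort appearing in Theorem~\ref{THM:nec}) must be enforced.
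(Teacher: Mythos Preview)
Your proposal is correct and follows essentially the same route as the paper: set up the four linear equations in $p_{ab}$, note the system has a unique solution, and verify that the explicit solution $p_{00}=p_{11}=\lambda_{12}/2$, $p_{01}=p_{10}=(1-\lambda_{12})/2$ is nonnegative for $\lambda_{12}\in[0,1]$. The only cosmetic difference is that you invoke the computation from Proposition~\ref{THM:concurrence} and spell out the nonnegativity check, whereas the paper simply asserts full rank and writes down the solution.
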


\begin{proof}
Let $p_{ij} = \probability(B_1 = i,B_2 = j)$.  Then the equations that are
necessary and sufficient to meet the distribution and convexity 
conditions are:
\[
p_{10} + p_{11} = 0.5,\ p_{01} + p_{11} = 0.5,\ p_{11} + p_{00} = \lambda_{12},
 \text{ and } p_{00} + p_{01} + p_{10} + p_{11} = 1.
\]
This system of linear equations has full rank, so there exists a 
unique solution.  Given there is a unique solution, it is easy to 
verify that solution is:
\[
p_{00} = (1/2)\lambda_{12},\ p_{01} = (1/2)[1-\lambda_{12}],
  \ p_{10} = (1/2)[1-\lambda_{12}],\ p_{11} = (1/2)\lambda_{12}.
\]
\end{proof}

This provides an alternate algorithm to that
found in~\cite{dukicm2013} for simulating from bivariate distributions with
correlation between $\rho^-_{1,2}$ and $\rho^+_{1,2}$. 

\begin{lemma}
\label{LEM:3B}
A random vector $(B_1,B_2,B_3)$ with $B_i \sim \bern(1/2)$ exists
(and is possible to simulate from in a constant number of steps) 
if and only if the concurrence graph
satisfies
\[
1 \leq \lambda_{23} + \lambda_{12} + \lambda_{13} \leq 
  1 + 2 \min\{\lambda_{12},\lambda_{13},\lambda_{23}\}
\]
\end{lemma}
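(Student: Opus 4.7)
My plan is to prove necessity and sufficiency separately, exploiting a pigeonhole identity for the first and a symmetrization argument for the second.

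For \textbf{necessity}, the key observation is that among any three $\{0,1\}$-valued random variables, by pigeonhole at least one pair must be equal. More precisely, the random variable $\sum_{1\leq i<j\leq 3}\ind(B_i=B_j)$ equals $3$ if $B_1=B_2=B_3$ and equals $1$ otherwise. Taking expectations gives the identity
\[
\lambda_{12}+\lambda_{13}+\lambda_{23} \;=\; 1 + 2\,\prob(B_1=B_2=B_3).
\]
The lower bound $\lambda_{12}+\lambda_{13}+\lambda_{23}\geq 1$ is then immediate from nonnegativity of probability, and the upper bound follows from the trivial containment $\{B_1=B_2=B_3\}\subseteq\{B_i=B_j\}$ for every pair, which gives $\prob(B_1=B_2=B_3)\leq\min\{\lambda_{12},\lambda_{13},\lambda_{23}\}$.

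For \textbf{sufficiency}, I would construct an explicit joint distribution. Write $p_{abc}=\prob(B_1=a,B_2=b,B_3=c)$. Since all three marginals are $\bern(1/2)$ and the concurrence events are invariant under the global flip $(B_1,B_2,B_3)\mapsto(1-B_1,1-B_2,1-B_3)$, I may seek a flip-symmetric solution, i.e.\ one with $p_{abc}=p_{(1-a)(1-b)(1-c)}$. This reduces the problem to four free parameters $p_{000},p_{100},p_{010},p_{001}$. The marginal and concurrence equations collapse (after using the total-mass constraint) to a linear system that I can solve explicitly; the natural choice turns out to be
\[
p_{000}=p_{111}=\frac{\lambda_{12}+\lambda_{13}+\lambda_{23}-1}{4},
\]
with $p_{100}=p_{011}=(\lambda_{23}-\lambda_{12}-\lambda_{13}+1)/4\cdot$ (adjusted appropriately so that $p_{100}+p_{000}$ gives the right marginal) and analogous formulas obtained by permutation for the other entries. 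A short calculation verifies the three concurrence constraints.

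The final step is to check that \emph{all eight $p_{abc}$ are nonnegative precisely under the hypothesis}. Nonnegativity of $p_{000}=p_{111}$ gives exactly $\lambda_{12}+\lambda_{13}+\lambda_{23}\geq 1$, and nonnegativity of each ``single one'' (or equivalently ``single zero'') probability reduces after simplification to $\lambda_{12}+\lambda_{13}+\lambda_{23}\leq 1+2\lambda_{ij}$ for the corresponding pair $\{i,j\}$; taking the min over the three pairs yields the stated upper bound. Because the resulting distribution is supported on $\{0,1\}^3$, simulation is constant-time (draw a uniform and compare against the cumulative weights of the eight atoms). I do not anticipate a serious obstacle: the only place care is required is bookkeeping the linear system so that the right-hand side constraint matches the minimum over the correct pair, and matching ``$\lambda_{12}$ appears in the bound from $p_{100}$'' rather than transposing indices.
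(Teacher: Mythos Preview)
Your argument is correct and takes a genuinely different route from the paper's. The paper writes down the full $8\times 8$ linear system in the atoms $p_{ijk}$, observes it has rank $7$, introduces a free parameter $\alpha=p_{111}$, solves explicitly, and then reads off the stated inequalities as precisely the condition for the feasible interval of $\alpha$ to be nonempty. Your necessity argument via the pigeonhole identity $\sum_{i<j}\ind(B_i=B_j)\in\{1,3\}$ is more conceptual and bypasses the linear algebra entirely; your sufficiency argument imposes the flip symmetry $p_{abc}=p_{(1-a)(1-b)(1-c)}$, which in the paper's parametrization corresponds to the particular choice $\alpha=\tfrac14(\lambda_{12}+\lambda_{13}+\lambda_{23}-1)$, and so exhibits one explicit solution rather than the whole one-parameter family. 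The paper's approach thus yields a bit more information (all solutions), while yours is shorter and makes the combinatorial source of the lower bound transparent. One small bookkeeping correction to your closing remark: the constraint $p_{100}\ge 0$ gives $\lambda_{12}+\lambda_{13}+\lambda_{23}\le 1+2\lambda_{23}$ (the doubled term is the one indexing the \emph{agreeing} pair $\{2,3\}$), not $1+2\lambda_{12}$---but this is exactly the index-tracking you already flagged as the place needing care.
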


\begin{proof}
Let $p_{ijk} = \probability(B_1 = i,B_2 = j,B_3 = k)$.  The first
condition is $\sum_{i,j,k} p_{i,j,k} = 1.$  There are 
three conditions from the marginals:
\[
\sum_{j,k \in \{0,1\}} p_{1jk} = 0.5,\ 
\sum_{i,k \in \{0,1\}} p_{i1k} = 0.5,\ 
\sum_{ij \in \{0,1\}} p_{ij1} = 0.5,
\]
and three conditions from the correlations
\[
\sum_{k \in \{0,1\}} p_{00k} + p_{11k} = \lambda_{12},\ 
\sum_{j \in \{0,1\}} p_{0j0} + p_{1j1} = \lambda_{13},\ 
\sum_{i \in \{0,1\}} p_{i00} + p_{i11} = \lambda_{23}.
\]
To get 8 equations, suppose that $p_{111} = \alpha$.

This 8 by 8 system of equations has full rank, so there is a unique
solution.  It is easy to verify that the solution is 
\begin{align*}
p_{000} &= (1/2)(\lambda_{12} + \lambda_{13} + \lambda_{23} - 1) - \alpha 
  \hspace*{1em}
  & p_{100} &= (1/2)(1-(\lambda_{12} + \lambda_{13})) + \alpha \\
p_{001} &= (1/2)(1-(\lambda_{13} + \lambda_{23})) + \alpha 
  & p_{101} &= (1/2)\lambda_{13} - \alpha \\
p_{010} &= (1/2)(1-(\lambda_{12} + \lambda_{23})) + \alpha 
  & p_{110} &= (1/2)\lambda_{12} - \alpha \\
p_{011} &= (1/2)\lambda_{23} - \alpha 
  & p_{111} &= \alpha 
\end{align*}

In order for this solution to yield probabilities, all must lie in $[0,1]$.
Since $p_{111} = \alpha$, $\alpha \geq 0$.
The $p_{011},p_{101},$ and $p_{110}$ equations then give
\begin{equation}
\label{EQN:restriction1}
0 \leq \alpha \leq (1/2)\min\{\lambda_{12},\lambda_{23},\lambda_{13}\}.
\end{equation}

The $p_{000}$ equation requires that 
\begin{equation}
\label{EQN:restriction2}
\alpha \leq (1/2)(\lambda_{13} + \lambda_{12} + \lambda_{23}  - 1).
\end{equation}
With these two conditions, 
equations $p_{001}$, $p_{010}$, and $p_{100}$ give the constraint
\begin{equation}
\label{EQN:restriction3}
(1/2)(\lambda_{13} + \lambda_{12} + \lambda_{23} - 
  \min\{\lambda_{13},\lambda_{12},\lambda_{23}\} - 1) \leq \alpha.
\end{equation}

Combining~\eqref{EQN:restriction3} and 
\eqref{EQN:restriction1} gives
\begin{equation}
\label{EQN:restriction4}
(1/2)(\lambda_{13} + \lambda_{12} + \lambda_{23} - 
  \min\{\lambda_{13},\lambda_{12},\lambda_{23}\} - 1) \leq 
   (1/2) \min\{\lambda_{13},\lambda_{12},\lambda_{23}\}.
\end{equation}

As long an $\alpha \geq 0$ exists satisfying \eqref{EQN:restriction2}
 and~\eqref{EQN:restriction4} holds, there exists a solution.
\end{proof}

From this result we see 
that not all positive definite correlation matrices are attainable
with $\bern(1/2)$ marginals.  For instance, if
$\lambda_{12} = \lambda_{13} = \lambda_{23} = 0.3$, then
$\rho_{12} = \rho_{13} = \rho_{23} = -0.4.$  With diagonal entries 
$1$, the $\rho$ values form a positive definite graph, but it is impossible
to build a multivariate distribution with $\bern(1/2)$ marginals with
these correlations.

\subsection{The $n = 4$ case:  asymmetric Bernoulli distributions}
\label{SEC:asymmetric}

To show the $n = 4$ case, it will be useful to understand
the problem of drawing a multivariate Bernoulli 
$(X_1,\ldots,X_n)$ where $X_i \sim \bern(p_i)$ where $i$ is not necessarily
$1/2$.

\begin{lemma}
\label{LEM:asymmetric}
An $n$ dimensional 
multivariate Bernoulli distribution where the marginal of component
$i$ is $\bern(p_i)$ and concurrence graph $\Lambda$ exists if and only
if an $n + 1$ dimensional multivariate Bernoulli distribution exists with
$\bern(1/2)$ marginals and concurrence graph
%\[ \left(
%\begin{array}{c|c}
%1 & p_1 \cdots p_n \\ \hline
%  p_1 & \raisebox{-15pt}{{\Large\mbox{{$\Lambda$}}}} \\[-4ex]
%  \vdots & \\[-0.5ex]
%  p_n &
%\end{array}
%\right)
%\]
\[ \left(
\begin{array}{c|c}
\raisebox{-15pt}{{\Large\mbox{{$\Lambda$}}}} & p_1 \\[-4ex]
 & \vdots \\
 & p_n \\
\hline
p_1 \cdots p_n & 1 \\ 
\end{array}
\right)
\]
\end{lemma}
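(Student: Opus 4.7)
My plan is to prove the two implications by exhibiting explicit couplings that are inverses of one another in distribution.

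For the ``if'' direction, I would suppose $(B_1,\ldots,B_{n+1})$ has $\bern(1/2)$ marginals and the augmented concurrence matrix displayed in the statement. I would then define $X_i := \ind(B_i = B_{n+1})$ for $i = 1,\ldots,n$. The marginal check is immediate: $\prob(X_i = 1) = \prob(B_i = B_{n+1}) = p_i$ by reading off the last column of the augmented matrix, so $X_i \sim \bern(p_i)$. For pairwise concurrences, I would use the observation that $X_i = X_j$ holds precisely when $B_i$ and $B_j$ agree with $B_{n+1}$ in the same direction---equivalently, when $B_i = B_j$---so $\prob(X_i = X_j) = \prob(B_i = B_j) = \lambda_{ij}$, as required.

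For the ``only if'' direction, I would start from $(X_1,\ldots,X_n)$ with the specified marginals and concurrence matrix $\Lambda$. I would then draw $B_{n+1} \sim \bern(1/2)$ independently of $(X_1,\ldots,X_n)$ and set $B_i := X_i$ when $B_{n+1} = 1$ and $B_i := 1 - X_i$ when $B_{n+1} = 0$. A one-line averaging gives $\prob(B_i = 1) = (1/2)p_i + (1/2)(1-p_i) = 1/2$. The event $\{B_i = B_{n+1}\}$ reduces in both cases of $B_{n+1}$ to $\{X_i = 1\}$, so $\prob(B_i = B_{n+1}) = p_i$, matching the last-column entries of the augmented matrix. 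Because the simultaneous bit-flip is applied identically to every coordinate, $B_i = B_j$ iff $X_i = X_j$, yielding $\prob(B_i = B_j) = \lambda_{ij}$.

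I do not anticipate a serious obstacle; the argument is essentially bookkeeping once the right coupling is identified. The only conceptual step is recognizing that $B_{n+1}$ plays the role of a random symmetrizer: conditioned on $B_{n+1} = 1$ we see $(X_1,\ldots,X_n)$ directly, conditioned on $B_{n+1} = 0$ we see its coordinate-wise complement, and mixing these two equally turns each $\bern(p_i)$ into a $\bern(1/2)$ without disturbing any pairwise agreement probability. Once this symmetrizer viewpoint is in hand, both directions collapse to a short case analysis on the value of $B_{n+1}$, and the two constructions are visibly inverses of one another in law.
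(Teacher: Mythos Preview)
Your proposal is correct and matches the paper's proof essentially line for line: the paper also sets $X_i = \ind(B_i = B_{n+1})$ for the ``if'' direction and, for the converse, draws $B_{n+1}\sim\bern(1/2)$ independently and puts $B_i = B_{n+1}X_i + (1-B_{n+1})(1-X_i)$, which is exactly your case-by-case definition. The verifications you outline (marginals, last-column concurrences, and preservation of $\prob(B_i=B_j)=\lambda_{ij}$) are the same ones the paper records.
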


\begin{proof}
Suppose such an $n + 1$ dimensional distribution exists with $\bern(1/2)$
marginals and specified concurrence graph.  Let $(B_1,\ldots,B_{n+1})$ be
a draw from this distribution.  Then set $X_i = \ind(B_{i} = B_{n+1}).$  The
concurrence graph gives $\probability(X_i = 1) = p_i$, and for $i \neq j$,
$\probability(X_i = X_j) = \probability(B_{i} = B_{j}) = \lambda_{ij}$.

Conversely, suppose such an $n$ dimensional distribution with $\bern(p_i)$
marginals exists.  Let $B_{n+1} \sim \bern(1/2)$ independent of the $X_i$, 
and set 
$B_{i} = B_{n+1} X_i + (1 - B_{n+1})(1 - X_i).$  Then 
$\probability(B_{i} = 1) = (1/2)p_i + (1/2)(1 - p_i) = 1/2$, and 
$\probability(B_{i} = B_{n+1}) = p_i$, the correct concurrence parameter.  Finally, for
$i \neq j$,
\[
\probability(B_{i} = B_{j}) = \probability(X_i = X_j) = \lambda_{ij}.
\]
\end{proof}

Lemma~\ref{LEM:asymmetric} can be used to finish the $n = 4$ case.

\begin{lemma}
A random vector $(B_1,B_2,B_3,B_4)$ with $B_i \sim \bern(1/2)$ exists
(and is possible to simulate in a constant number of steps) 
if and only if for
\begin{align*}
\ell &= \max\{\lambda_{14} + \lambda_{24} + \lambda_{13} + \lambda_{23},
 \lambda_{14} + \lambda_{34} + \lambda_{12} + \lambda_{23},
 \lambda_{24} + \lambda_{34} + \lambda_{12} + \lambda_{13}\} \\
u &= \min_{\{i,j,k\}} (\lambda_{ij} + \lambda_{jk} + \lambda_{ik}),
\end{align*}
it is true that 
\[
\ell \leq u + 1 \text{ and } 1 \leq u.
\]
\end{lemma}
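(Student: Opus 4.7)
The plan is to reduce the four-dimensional symmetric-Bernoulli existence question to a three-dimensional (possibly asymmetric) Bernoulli question via Lemma~\ref{LEM:asymmetric}, and then to solve that problem directly by writing down the eight joint cell probabilities and imposing nonnegativity.

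First, I would apply Lemma~\ref{LEM:asymmetric} in the ``only if'' direction: the desired $(B_1,B_2,B_3,B_4)$ with $\bern(1/2)$ marginals and concurrence matrix $\Lambda$ exists if and only if there is a trivariate $(X_1,X_2,X_3)$ with $X_i \sim \bern(\lambda_{i4})$ and pairwise concurrence $\prob(X_i = X_j) = \lambda_{ij}$ for $i,j \in \{1,2,3\}$. The rest of the argument is then devoted to finding the existence condition for this trivariate distribution.

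Next, write $p_{ijk} = \prob(X_1 = i, X_2 = j, X_3 = k)$ and parametrize by $\alpha = p_{111}$. The three marginal equations, three pairwise-concurrence equations, and the normalization form a full-rank seven-equation system in the eight unknowns (as in Lemma~\ref{LEM:3B}), so each $p_{ijk}$ solves uniquely to an affine expression in $\alpha$. A short inclusion-exclusion calculation shows that the four cells whose index has an even number of 1's (namely $p_{000}, p_{110}, p_{101}, p_{011}$) each take the form $(T - 1)/2 - \alpha$, where $T$ ranges over the four triangle sums $\lambda_{ij} + \lambda_{ik} + \lambda_{jk}$ of $K_4$ (for example, $p_{110}$ carries the triangle $\{1,2,4\}$). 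The remaining four cells $p_{111}, p_{100}, p_{010}, p_{001}$ take the form $\alpha$ or $\alpha + 1 - C/2$, where $C$ ranges over the three 4-cycle sums appearing in the definition of $\ell$---equivalently, $C$ is the $\lambda$-weight on the four cross-edges of a partition of $\{1,2,3,4\}$ into two pairs.

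Finally, a valid $\alpha \in [0,1]$ exists if and only if the maximum of the lower bounds (from the odd-weight cells) does not exceed the minimum of the upper bounds (from the even-weight cells). The even-weight bounds combine into $\alpha \leq (u-1)/2$; the odd-weight bounds combine into a cycle-based lower bound that, after rearrangement, is exactly the stated inequality involving $\ell$. I expect the main obstacle to be bookkeeping: matching each $p_{ijk}$ to the correct triangle or 4-cycle via the identification $X_i = \ind(B_i = B_4)$ coming from Lemma~\ref{LEM:asymmetric}, and verifying that the four per-triangle conditions of Lemma~\ref{LEM:3B} applied to the sub-triples of $\{1,2,3,4\}$ are all automatically subsumed by the single combined inequality in the statement.
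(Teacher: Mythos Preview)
Your plan is essentially the paper's own argument: reduce via Lemma~\ref{LEM:asymmetric} to a trivariate $(X_1,X_2,X_3)$ with $X_i\sim\bern(\lambda_{i4})$ and concurrence $\lambda_{ij}$, parametrize the eight cell probabilities by $\alpha=p_{111}$, solve the resulting full-rank linear system, and read off the existence condition from nonnegativity of the cells. Your triangle/4-cycle bookkeeping is exactly what the paper's explicit $q_{ijk}$ formulas encode, and the ``per-triangle conditions'' you worry about subsuming are nothing other than the even-weight nonnegativity constraints themselves, so no separate verification is needed.
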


\begin{proof}
By using Lemma~\ref{LEM:asymmetric}, the problem is reduced to finding
a distribution for $(X_1,X_2,X_3)$ where $X_i \sim \bern(\lambda_{i4})$ and
the upper 3 by 3 minor of $\Lambda$ is the new concurrence matrix.  Just as
in Lemma~\ref{LEM:3B}, this gives eight equations of full rank with a 
single parameter $\alpha$.  Letting $q_{ijk} = \prob(X_1 = i,X_2 = j,X_3 = k)$,
the unique solution is 
\begin{align*}
q_{000} &= (1/2)(\lambda_{12} + \lambda_{13} + \lambda_{23}) - (1/2) - \alpha \\
q_{001} &= -(1/2)(\lambda_{14} + \lambda_{24} + \lambda_{13} + \lambda_{23}) + 1 
   + \alpha \\
q_{010} &= -(1/2)(\lambda_{14} + \lambda_{34} + \lambda_{12} + \lambda_{23}) + 1 
   + \alpha \\
q_{011} &= (1/2)(\lambda_{24} + \lambda_{34} + \lambda_{23}) - (1/2) - \alpha \\
q_{100} &= -(1/2)(\lambda_{24} + \lambda_{34} + \lambda_{12} + \lambda_{13}) + 1 
   + \alpha \\
q_{101} &= (1/2)(\lambda_{14} + \lambda_{34} + \lambda_{13}) - (1/2) - \alpha \\
q_{110} &= (1/2)(\lambda_{34} + \lambda_{24} + \lambda_{12}) - (1/2) - \alpha \\
q_{111} &= \alpha.
\end{align*}

All of these right hand sides lie in $[0,1]$ if and only if 
$u \geq 1$, $\ell \leq u + 1$, and $\alpha$ is chosen to lie in
$[(1/2)\ell-1,(1/2)(u - 1)] \cap [0,1]$.
\end{proof}

As with the 3 dimensional case, this proof
can be used to simulate
a 4 dimensional multivariate symmetric Bernoulli:  generate $(X_1,X_2,X_3)$
using any $\alpha \in [(1/2)\ell-1,(1/2)(u - 1)] \cap [0,1]$ and the $q$ distribution, 
then generate
$B_4 \sim \bern(1/2)$, and then set 
$B_i$ to be $B_4 X_i + (1 - B_4)(1 - X_i)$ for $i \in \{1,2,3,4\}$.

\section{Conclusions}

The Fr\'echet-Hoeffding bounds give a lower and upper bound on the pairwise
correlation between two random variables with given marginals.  Hence for
higher dimensions the correlation matrix 
provides edge weights for a convexity graph
whose parameters indicated where on the line from the lower to the upper bound
the correlation lies.  
When it is possible to build a multivariate distribution with these
convexities for marginals that are symmetric Bernoulli, then it is 
possible to build a multivariate distribution with these convexities for
arbitrary marginals.
For two, three and four dimensions, 
the set of convexity matrices that yield a
symmetric Bernoulli distribution is characterized completely.
For five or higher dimensions, every subset of three and four 
have these characterizations as necessary conditions.  

\vskip7mm
\par {\bf Acknowledgement} \hskip3mm Support from the 
National Science Foundation (grant DMS - 1007823) is gratefully acknowledged.

\end{document}